\newtheorem{theorem}{Theorem}[section]
\newtheorem{corollary}[theorem]{Corollary}
\newtheorem{lemma}[theorem]{Lemma}
\theoremstyle{definition}
\theoremstyle{remark}
\subjclass[2010]{: 39B55, 39B52, 39B82} \keywords{:\, Additive functioanl equation, Cubic functional equation,  Generalized Hyers-Ulam stability.}
\begin{document}
\title{SOLUTION AND STABILITY OF A MIXED TYPE \\ FUNCTIONAL EQUATION}
\author{Pasupathi Narasimman$^1$ and Abasalt Bodaghi$^2$}
\maketitle
\begin{center}
\pagestyle{myheadings} \markboth{ P.  Narasimman, A. Bodaghi} {SOLUTION AND STABILITY OF A MIXED TYPE FUNCTIONAL EQUATION}
\thispagestyle{empty} \maketitle $^{1}$Department of Mathematics, Jeppiaar Institute of Technology,\\
Kunnam, Sriperumbudur(TK), Chennai- 631 604, India.\\
{\bf E-mail:~}\verb+drpnarasimman81@gmail.com+\\
\maketitle$^{2}$Department of Mathematics, Garmsar Branch,\\ Islamic Azad University, Garmsar, Iran.\\
{\bf E-mail:~}\verb+abasalt.bodaghi@gmail.com+
\end{center}
\begin{abstract}
In this paper, we obtain the general solution and investigate the generalized 
Hyers-Ulam-Rassias stability for the new mixed type additive and cubic functional equation 
\begin{align*}
 &3f(x+3y) - f(3x + y)\\&\quad=12[f(x+y)+f(x-y)]-16[f(x)+f(y)] + 12f(2y) - 4f(2x).
\end{align*}
As some corollaries, we show that the stability of this equation can be controlled by the sum and product of powers of norms.
\end{abstract}

\setcounter{equation}{0}
\section{Introduction}
In 1940, Ulam \cite{Ulam} raised the following fundamental question in the theory of functional equations concerning the group homomorphism: 
\par ``When is it true that a function, which approximately satisfies a functional equation must be close to an exact solution of the equation?"
\par One year later, Hyers \cite{Hyers} gave an affirmative solution to the above problem concerning the Banach space. The result of Hyers was generalized by Aoki \cite{Aoki} for approximate additive mappings and by Rassias \cite{THMR} for approximate linear mappings by allowing the difference Cauchy equation  $\left\| {f(x + y) - f(x) - f(y)} \right\|$ to be controlled by 
$\varepsilon (\left\| x \right\|^p  + \left\| y \right\|^p )$. The stability phenomenon that was proved by Rassias \cite{THMR} is called the Hyers-Ulam-Rassias stability. 
\par In 1982-1989, J. M. Rassias \cite{ra2, ra3, ra4, ra5} generalized the Hyers stability result
by presenting a weaker condition controlled by a product of different powers of norms. In fact, he proved the following theorem:
\begin{theorem} Let $X$ be a real normed linear space and $Y$ a real
complete normed linear space. Assume that $f:X \longrightarrow Y$  is an
approximately additive mapping for which there exist constants
$\theta \geq0$ and $p,q \in \mathbb{R}$ such that $r=p+q \neq 1$ and
$f$ satisfies inequality
\begin{equation}\label{1.6}
\left\| {f(x + y) - f(x) - f(y)} \right\| \le \theta \left\| x
\right\|^{^p } \left\| y \right\|^{^q }
\end{equation}
for all $x,y \in X$. Then there exist a unique additive mapping $L:E
\longrightarrow E'$ satisfying
\begin{equation}\label{1.7}
\left\| {f(x) - L(x)} \right\| \le \frac{\theta }{{\left| {2^r  - 2}
\right|}}\left\| x \right\|^r
\end{equation}
for all $x \in X$. If, in addition, $f:X\longrightarrow Y$  is a mapping such
that the transformation $t \longmapsto f(tx)$  is continuous in $t\in
\mathbb{R}$  for each fixed  $x \in X$, then $L$ is an
$\mathbb{R}$-linear mapping.
\end{theorem}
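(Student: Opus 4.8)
The plan is to use the direct method of Hyers, building the additive map $L$ as the limit of a suitably normalized iterate of $f$. First I would extract the fundamental one-variable estimate by setting $y=x$ in the controlling inequality \eqref{1.6}, which gives $\|f(2x)-2f(x)\|\le\theta\|x\|^{p+q}=\theta\|x\|^{r}$ for all $x\in X$. Dividing by $2$ yields $\bigl\|\tfrac{1}{2}f(2x)-f(x)\bigr\|\le\tfrac{\theta}{2}\|x\|^{r}$, which is the seed for the iteration. A preliminary remark is that $f(0)=0$: when $p,q>0$ this follows from putting $x=y=0$, and in the remaining cases the argument is run only for $x\neq0$ and the bound extends trivially to $x=0$.

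The argument then splits according to the sign of $r-1$, since $r\neq1$ by hypothesis. For $r<1$ I would study the sequence $\tfrac{1}{2^{n}}f(2^{n}x)$. Iterating the seed inequality and using the triangle inequality gives the telescoping estimate
\begin{equation*}
\Bigl\|\tfrac{1}{2^{n}}f(2^{n}x)-f(x)\Bigr\|\le\frac{\theta}{2}\|x\|^{r}\sum_{k=0}^{n-1}2^{k(r-1)},
\end{equation*}
and since $2^{r-1}<1$ the geometric series converges, with total $\tfrac{2}{2-2^{r}}$. This shows the sequence is Cauchy; completeness of $Y$ lets me define $L(x)=\lim_{n\to\infty}2^{-n}f(2^{n}x)$, and letting $n\to\infty$ produces exactly the bound $\|f(x)-L(x)\|\le\frac{\theta}{2-2^{r}}\|x\|^{r}=\frac{\theta}{|2^{r}-2|}\|x\|^{r}$ of \eqref{1.7}. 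For $r>1$ the same scheme works with the companion sequence $2^{n}f(x/2^{n})$, where now $2^{1-r}<1$ drives the convergence and produces the denominator $|2^{r}-2|$ again.

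To see that $L$ is additive I would replace $(x,y)$ by $(2^{n}x,2^{n}y)$ in \eqref{1.6} and divide by $2^{n}$, bounding the defect by $2^{n(r-1)}\theta\|x\|^{p}\|y\|^{q}$, which tends to $0$ as $n\to\infty$ precisely because $r<1$ (and symmetrically in the other case); passing to the limit gives $L(x+y)=L(x)+L(y)$. Uniqueness follows from a standard comparison: if $L'$ is another additive solution with the same estimate, then $\|L(x)-L'(x)\|=2^{-n}\|L(2^{n}x)-L'(2^{n}x)\|$ is dominated by $2^{n(r-1)+1}\frac{\theta}{|2^{r}-2|}\|x\|^{r}\to0$, forcing $L=L'$. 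Finally, under the continuity hypothesis on $t\mapsto f(tx)$, the map $L$ is continuous in the same sense, and a continuous additive map on a real normed space is automatically $\mathbb{R}$-linear, giving the last assertion. The main obstacle I anticipate is organizing the two cases $r<1$ and $r>1$ uniformly so that the telescoping sums converge and collapse to the single clean constant $|2^{r}-2|^{-1}$, while keeping track of the $x=0$ and negative-exponent subtleties in the control function.
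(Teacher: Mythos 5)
Your proposal is correct and follows essentially the classical direct (Hyers) method that J.~M. Rassias used in the papers cited for this theorem; the paper itself only quotes the statement as background and gives no proof of its own. Your case split on $r<1$ versus $r>1$, the telescoping estimate collapsing to the constant $\theta/\left|2^{r}-2\right|$, and the limit arguments for additivity, uniqueness, and $\mathbb{R}$-linearity all match the standard argument.
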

\par In 1994, a generalization of Rassias \cite{THMR} theorem was obtained by G\v avruta \cite{Gavruta}, who replaced $\varepsilon (\left\| x \right\|^p  + \left\| y \right\|^p )$ by a general control function $\phi(x,y)$. This idea is known as generalized Hyers-Ulam-Rassias stability. Since then, general stability problems of various functional equations such as quadratric, cubic, quartic, quintic, sexitic and mixed type of such functional equations and also Pexiderized versions with more general domains and ranges have been investigated by a number of authors \cite{bod1, boda2, Czerwik, Jun01, Jun05, Najati}.

Hyers-Ulam-Rassias stability for a mixed quadratic and additive functional equation
\begin{equation}\label{1.3}
 f(x+2y)+f(x-2y)+4f(x)=3\left[f(x+y)+f(x-y)\right]+f(2y)-2f(y)
\end{equation}
in quasi Banach Space was dealt by Moradlou et al., in \cite{FM}.
It is easy to see that the function $f(x)=ax^2+bx$ is a solution of the functional equation (\ref{1.3}); for the general case of  (\ref{1.3}) see \cite{bk1} and \cite{bk2}.

 In 2001, J. M. Rassias \cite{ra1}, introduced the cubic functional equation
\begin{eqnarray}\label{r}f(x+2y)-3f(x+y)+3f(x)-f(x-y)=6f(y)\end{eqnarray}
and established the solution of the Ulam-Hyers stability problem for these cubic mappings. It may be noted that the function $f(x)= cx^3$ is a solution of the functional equation (\ref{r}). Hence, every solution of the cubic functional equation (\ref{r}) is said to be a  cubic function. Other versions of a  cubic functional equation can be found in  \cite{boda}, \cite{bmr}, \cite{Junkim02} and \cite{jk}. 
\par In 2010, J. M. Rassias et al., \cite{rras}, found the general solution and Ulam stability of mixed type cubic and additive functional equation of the form
\begin{eqnarray}\label{rr}
&3f(x+y+z)+f(-x+y+z)+f(x-y+z)+f(x+y-z)\\&\qquad\notag+4[f(x)+f(y)+f(z)]=4[f(x+y)+f(x+z)+f(y+z)]
\end{eqnarray}
They also studied the stability of the equation (\ref{rr}) controlled by a mixed type product-sum of powers of norms. In the same year, K. Ravi et al., \cite{rs}, investigated the general solution and Ulam stability of mixed type cubic and additive functional equation (\ref{rr}) in fuzzy normed spaces.
 
\par In this paper, we discuss a new additive and cubic type functional equation of the form
\begin{align}\label{1.4}
&3f(x+3y) - f(3x + y)\notag \\&\quad=12[f(x+y)+f(x-y)]-16[f(x)+f(y)] + 12f(2y) - 4f(2x)
\end{align}
and obtain its general solution and also investigate its generalized Hyers-Ulam-Rassias stability in Banach spaces. Finally, we prove that the stability of the equation (\ref{1.4}) can be controlled by the sum and product of powers of norms.


\setcounter{equation}{0}
\section{General Solution of the Functional Equation (\ref{1.4})}
Throughout this section, we assume that $X$ and $Y$ are linear spaces. We will find
out the general solution of (\ref{1.4}). Firstly, we indicate two following lemmas which play fundamental role to reach our goal.
\begin{lemma}\label{lem1}
A mapping $ f:X \longrightarrow Y $ is additive if and only
if $f$ satisfies the functional equation
\begin{equation}\label{2.1}
3f(x+3y) - f(3x + y) = 12[f(x+y) + f(x-y)]-24f(x)+8f(y)
\end{equation}
for all $x,y\in X$.
\end{lemma}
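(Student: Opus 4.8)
Since the statement is an equivalence I would prove the two implications separately, the converse being the substantial half. The ``only if'' direction is a one-line verification: if $f(x+y)=f(x)+f(y)$, then $f(x+3y)=f(x)+3f(y)$, $f(3x+y)=3f(x)+f(y)$ and $f(x\pm y)=f(x)\pm f(y)$, so the left side of (2.1) equals $8f(y)$ and the right side equals $12\cdot 2f(x)-24f(x)+8f(y)=8f(y)$ as well.

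For the ``if'' direction I would first harvest normalizations from special substitutions: $x=y=0$ gives $f(0)=0$; $y=0$ gives $f(3x)=3f(x)$; and $x=0$, after feeding in these two facts, gives $f(-y)=-f(y)$, so $f$ is odd. I would then exploit the asymmetry of (2.1) by writing it as $E(x,y)$ and also writing $E(y,x)$ simplified by oddness; eliminating $f(3x+y)$ and then $f(x+3y)$ from this pair produces the two reduced identities $f(x+3y)=6f(x+y)+3f(x-y)-8f(x)$ and $f(3x+y)=6f(x+y)-3f(x-y)-8f(y)$.

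The heart of the argument is to collapse these into one additive-type relation. Replacing $y$ by $-y$ in the first reduced identity and subtracting gives $f(x+3y)-f(x-3y)=3[f(x+y)-f(x-y)]$; substituting $x\mapsto 3x$ and using $f(3\,\cdot\,)=3f(\cdot)$ cancels the factor $3$ and yields $f(x+y)-f(x-y)=f(3x+y)-f(3x-y)$. Computing $f(3x+y)-f(3x-y)$ instead from the second reduced identity gives $9[f(x+y)-f(x-y)]-16f(y)$, so equating the two expressions forces the Jensen-type relation $f(x+y)-f(x-y)=2f(y)$ for all $x,y$. From this I would obtain full additivity by a three-variable argument: applying the relation to the pairs $(x,y+z)$ and $(x+z,y)$ and subtracting, the leftover difference is another instance of the same relation (with first argument $x-y$, second argument $z$), which gives $f(y+z)=f(y)+f(z)$.

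I expect the only real obstacle to lie in the converse, and within it in the elimination-and-rescaling step: the decisive move is the substitution $x\mapsto 3x$ together with $f(3\,\cdot\,)=3f(\cdot)$, which is exactly what forces the coefficient to drop from $9$ to the additive value $2$. Once the difference equation is in hand, the passage to genuine additivity is routine modulo the choice of the three-variable substitution.
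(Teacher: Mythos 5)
Your proof is correct --- I checked every step --- but it takes a genuinely different and considerably shorter route than the paper's. The paper's converse never swaps the roles of $x$ and $y$; instead it substitutes $(x,y)\mapsto(x-y,x+y)$ to first extract $f(2x)=2f(x)$, and then works through a long chain of auxiliary identities (roughly fifteen displayed equations involving $f(2x\pm y)$, $f(3x\pm y)$ and $f(4x\pm y)$) before arriving at the Jensen-type relation $f(x-y)=2f(x)-f(x+y)$ and combining it with one more identity to get additivity. Your key move --- writing $E(x,y)$ together with the odd-symmetrized $E(y,x)$ and solving the resulting linear system for $f(x+3y)$ and $f(3x+y)$ separately --- exploits the asymmetry of the equation directly, and the rescaling $x\mapsto 3x$ combined with $f(3x)=3f(x)$ then pins down $f(x+y)-f(x-y)=2f(y)$ in two lines. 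Your closing three-variable argument ($P(x,y+z)$ minus $P(x+z,y)$ leaves an instance of $P(x-y,z)$) is the standard passage from the Jensen difference relation to additivity and is also cleaner than the paper's final combination of its equations (2.20) and (2.27). What your approach buys is brevity and transparency; what the paper's buys, arguably, is a collection of intermediate identities (such as $f(2x)=2f(x)$ and the $f(2x\pm y)$ formulas) that are reused in spirit in its Section 3, though none of them is logically needed for the lemma itself. The forward direction is the same routine verification in both.
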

\begin{proof} Suppose that $f$ is additive. Then,  the
standard additive functional equation
\begin{equation}\label{2.2}
f(x+y)=f(x)+f(y)
\end{equation}
holds for all $x,y\in X$. Putting $x=y=0$ in (\ref{2.2}), we see that
$f(0)=0$, and setting $(x,y)$ by $(x,x)$ in (\ref{2.2}), we obtain
\begin{equation}\label{2.3}
f(2x) = 2f(x)
\end{equation}
for all $x\in X$. Replacing $y$ by $2x$ in (\ref{2.2}) and using (\ref{2.3}),
we get
\begin{equation}\label{2.4}
f(3x) = 3f(x)
\end{equation}
for all $x\in X$. Interchanging $y$ into $-x$ in (\ref{2.2}), we arrive at
\[
f(-x)=-f(x)
\]
for all $x\in X$. Consequently, $f$  is odd. Setting $(x,y)$ by $(x+y,x-y)$ in
(2.2) and multiply the resultant by 12, we have
\begin{equation}\label{2.5}
24f(x)=12[f(x+y)+f(x-y)]
\end{equation}
for all $x,y\in X$. Switching $(x,y)$ to $(x,3y)$ in
(\ref{2.2}) and multiplying  the resultant by 3, we get
\begin{equation}\label{2.6}
3f(x+3y)=3f(x)+9f(y)
\end{equation}
for all $x,y\in X$. Substituting $x$ by $3x$ in
(\ref{2.2}), we obtain
\begin{equation}\label{2.7}
f(3x+y)=3f(x)+f(y)
\end{equation}
for all $x,y\in X$. Subtracting (\ref{2.7}) from (\ref{2.6}), we deduce that
\begin{equation}\label{2.8}
3f(x+3y)-f(3x+y)=8f(y)
\end{equation}
for all $x\in X$. Adding (\ref{2.5}) and (\ref{2.8}), we arrive (\ref{2.1}). 

Conversely, assume that $f$ satisfies the functional equation (\ref{2.1}). Setting $(x,y)=(0,0)$ and $(x,0)$ in (\ref{2.1}), we get $f(0) = 0$ and 
\begin{equation}\label{2.9}
f(3x) = 3f(x)
\end{equation}
respectively, for all $x \in X$. Replacing $(x,y)$ by $(0,x)$ in (\ref{2.1}) and using
(\ref{2.9}), we obtain
\begin{equation}\label{2.10}
f(-x)=-f(x)
\end{equation}
for all $x\in X$. Thus $f$ is an odd function. Letting $(x,y)=(-x,x)$  in (\ref{2.1}) and
applying (\ref{2.10}), we have
\begin{equation}\label{2.11}
f(2x)=2f(x)
\end{equation}
for all $x\in X$. Interchanging $(x,y)$ into $(x-y,x+y)$ in (\ref{2.1}), we get
\begin{equation}\label{2.12}
3f(4x+2y)-f(4x-2y)=24[f(x)-f(y)]-24f(x-y)+8f(x+y)
\end{equation}
for all $x,y\in X$. Replacing $y$  by $-y$ in (\ref{2.12}), using the oddness of $f$, and then adding the resultant
equation to (\ref{2.12}) and again applying (\ref{2.11}), we arrive at
\begin{equation}\label{2.13}
f(2x+y)+f(2x-y)=12f(x)-4f(x+y)-4f(x-y)
\end{equation}
for all $x,y\in X$. Setting $(x,y)=(x,-2y)$ in (\ref{2.13}) and using (\ref{2.11}), we get
\begin{equation}\label{2.14}
f(x - y) + f(x + y) = 6f(x) - 2f(x-2y)-2f(x+2y)
\end{equation}
for all $x,y\in X$. Replacing $x,y$, by $y,x$ in (\ref{2.14}), respectively, and using the oddness of $f$, we obtain
\begin{equation}\label{2.15}
-f(x - y) + f(x + y) = 6f(y) + 2f(2x-y)-2f(2x+y)
\end{equation}
for all $x,y\in X$. Switching $y$ to $-y$ in (\ref{2.15}),  we deduce that
\begin{equation}\label{2.16}
2f(2x - y)= 2f(2x + y)-6f(y) - f(x-y)+f(x+y)
\end{equation}
for all $x,y\in X$. Plugging (\ref{2.16}) into (\ref{2.13}), we have
\begin{equation}\label{2.17}
4f(2x + y)= -9f(x + y)-7f(x-y) + 24f(x)+6f(y)
\end{equation}
for all $x,y\in X$. Replacing $x+y$ by $y$ in (\ref{2.17}), we get
\begin{equation}\label{2.18}
7f(2x - y)= -4f(x + y)-6f(x-y) - 9f(y)+24f(x)
\end{equation}
for all $x,y\in X$. Adding (\ref{2.17}) and (\ref{2.18}), we obtain
\begin{equation}\label{2.19}
f(2x + y)+f(2x-y)= -\frac{79}{28}f(x + y)-\frac{73}{28}f(x-y)+\frac{6}{28}f(y)+\frac{264}{28}f(x)
\end{equation}
for all $x,y\in X$. Now, it follows from (\ref{2.13}) and (\ref{2.19}) that
\begin{equation}\label{2.20}
-11f(x + y)-13f(x-y)= 2f(y)-24f(x)
\end{equation}
for all $x,y\in X$. Replacing $x$ by $2x$ in (\ref{2.13}) and in the resultant again using (\ref{2.13}), we get
\begin{equation}\label{2.21}
f(4x + y)+f(4x-y)= -24f(x)+16f(x+y)+16f(x-y)
\end{equation}
for all $x,y\in X$. Putting $2x+y$ instead of $y$ in (\ref{2.13}) and applying the oddness of $f$, we obtain
\begin{equation}\label{2.22}
f(4x + y)-f(y)= 12f(x)-4f(3x+y)+4f(x+y)
\end{equation}
for all $x,y\in X$. Replacing $y$  by $-y$ in (\ref{2.22}), using oddness of $f$ and adding the resultant
equation with (\ref{2.22}), we have
\begin{equation}\label{2.23}
f(4x + y) + f(4x - y) = 24f(x) - 4[f(3x+y)+f(3x-y)]+4[f(x+y)+f(x-y)]
\end{equation}
for all $x,y\in X$. Substituting $y$ by $x+y$ in (\ref{2.13}) and using oddness of $f$, we arrive
\begin{equation}\label{2.24}
f(3x + y) + f(x - y) = 12f(x) - 4f(2x+y)+4f(y)
\end{equation}
for all $x,y\in X$. Replacing $y$ by $-y$ in (\ref{2.24}) and combining the resultant equation with (\ref{2.24}) and using (\ref{2.13}), we get
\begin{equation}\label{2.25}
f(3x + y)+f(3x-y)= -24f(x)+15f(x+y)+15f(x-y)
\end{equation}
for all $x,y\in X$. Plugging (\ref{2.25}) into (\ref{2.23}), we have
\begin{equation}\label{2.26}
f(4x+y)+f(4x-y)=120f(x)-56f(x+y)-56f(x-y)
\end{equation}
for all $x,y\in X$.  Combining the equations (\ref{2.21}) and (\ref{2.26}) to obtain 
\begin{equation}\label{2.27}
f(x-y)=2f(x)-f(x+y)
\end{equation}
for all $x,y\in X$. The equations (\ref{2.27}) and (\ref{2.20})  necessities (\ref{2.2}).
Therefore, $f$  is additive function.
\end{proof}
The following lemma is proved in \cite[Theorem 2.2]{RaviPN}.
\begin{lemma}\label{lem2}
A mapping $ f:X \longrightarrow Y $ is cubic if and only
if $f$ satisfies the functional equation
\begin{equation}\label{2.28}
3f(x+3y)-f(3x+y)=12[f(x+y)+f(x-y)]-48f(x)+80f(y)
\end{equation}
for all $x,y\in X$.
\end{lemma}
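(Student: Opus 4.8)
The plan is to mirror the proof of Lemma \ref{lem1}, replacing the additive normalizations by their cubic counterparts. Recall that a cubic mapping satisfies the standard cubic functional equation $f(2x+y)+f(2x-y)=2f(x+y)+2f(x-y)+12f(x)$; inserting $(x,y)=(0,0),(x,0),(0,y),(x,x)$ into it yields the basic data $f(0)=0$, $f(2x)=8f(x)$, $f$ odd, and $f(3x)=27f(x)$. These four facts are the cubic analogues of (\ref{2.3})--(\ref{2.4}) and drive the entire forward direction.

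For the ``only if'' part I would first derive from the base equation the sum and difference formulas
\begin{equation*}
f(x+2y)+f(x-2y)=4f(x+y)+4f(x-y)-6f(x),\qquad f(x+2y)-f(x-2y)=2f(x+y)-2f(x-y)+12f(y),
\end{equation*}
the first by putting $y\mapsto 2y$ in the base equation and using $f(2x)=8f(x)$, the second by interchanging $x,y$ and invoking oddness. Adding them gives $f(x+2y)=3f(x+y)+f(x-y)-3f(x)+6f(y)$; applying this relation once with $x\mapsto x+y$ and then substituting it again for the remaining $f(x+2y)$ produces the single-variable addition law $f(x+3y)=6f(x+y)+3f(x-y)-8f(x)+24f(y)$. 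Swapping $x\leftrightarrow y$ and using oddness yields the companion $f(3x+y)=6f(x+y)-3f(x-y)+24f(x)-8f(y)$. Then $3f(x+3y)-f(3x+y)$ collapses termwise to $12[f(x+y)+f(x-y)]-48f(x)+80f(y)$, which is exactly (\ref{2.28}).

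For the converse I would start, as in Lemma \ref{lem1}, by extracting the normalizations directly from (\ref{2.28}): $(x,y)=(0,0)$ gives $f(0)=0$, $y=0$ gives $f(3x)=27f(x)$, and $x=0$ together with $f(3y)=27f(y)$ forces $f(-y)=-f(y)$, so $f$ is odd. The substance is then to run a substitution chain analogous to (\ref{2.12})--(\ref{2.27}): replace $(x,y)$ by pairs such as $(x-y,x+y)$, $(x,-2y)$, $(2x,y)$ and $(x,x+y)$, symmetrize using oddness, and eliminate the unwanted translates $f(4x\pm y)$, $f(3x\pm y)$, $f(2x\pm y)$ one class at a time until only the terms of the standard cubic equation survive.

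I expect this converse to be the main obstacle: exactly as in Lemma \ref{lem1}, the intermediate relations need not be clean (the additive case even passed through coefficients like $\tfrac{79}{28}$), so the difficulty is bookkeeping---choosing substitutions whose linear combinations cancel every non-cubic term---rather than any single conceptual step. Since (\ref{2.28}) is precisely the equation treated in \cite[Theorem 2.2]{RaviPN}, one may alternatively just invoke that result; the sketch above records the self-contained route.
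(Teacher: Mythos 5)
The paper gives no proof of Lemma \ref{lem2} at all: it simply cites \cite[Theorem 2.2]{RaviPN}, which is exactly the fallback you name in your last sentence. So on the paper's own terms your proposal matches. Regarding the self-contained route you sketch: the forward direction is correct and complete --- I checked the algebra, and it is worth noting that your intermediate identity $f(x+2y)=3f(x+y)+f(x-y)-3f(x)+6f(y)$ is precisely the paper's defining cubic equation (\ref{r}) rearranged, so this half runs verbatim from the paper's definition of ``cubic'' without needing to first pass through the two-variable equation $f(2x+y)+f(2x-y)=2f(x+y)+2f(x-y)+12f(x)$ (whose equivalence with (\ref{r}) you would otherwise owe an argument for).

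The converse, however, is a plan rather than a proof: you describe the class of substitutions and the goal (eliminate $f(4x\pm y)$, $f(3x\pm y)$, $f(2x\pm y)$), but exhibit no actual chain and no cancellation, and --- as the $\tfrac{79}{28}$-type coefficients in Lemma \ref{lem1} show --- there is no a priori guarantee that an arbitrary choice of substitutions closes up. As a self-contained argument this half therefore has a genuine gap; it is covered only because you also invoke \cite{RaviPN}, which is what the paper itself does.
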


\begin{theorem}
A function $f:X \longrightarrow Y$ satifies the equation \emph{(\ref{1.4})} for all $x,y \in X$ if and only if then there exist  additive functions  $A,H:X \longrightarrow Y$ and cubic functions $C,G:X \longrightarrow Y$ such that $f(x)=H(x)+G(x)$ for all $x \in X$, where $H(x)=A(2x)-8A(x)$ and $G(x)=C(2x)-2C(x)$.
\end{theorem}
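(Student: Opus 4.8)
The plan is to treat the two implications separately: sufficiency by a direct verification, and necessity by an odd/even reduction followed by an explicit splitting of $f$ into an additive and a cubic summand.

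For the sufficiency I would first record that every additive mapping and every cubic mapping is a solution of (\ref{1.4}). If $H$ is additive, then $H(2x)=2H(x)$ and $H(x\pm y)=H(x)\pm H(y)$, so both sides of (\ref{1.4}) collapse to $8H(y)$. If $G$ is cubic, then $G(2x)=8G(x)$, and the right-hand side of (\ref{1.4}) simplifies to exactly $12[G(x+y)+G(x-y)]-48G(x)+80G(y)$, which equals the left-hand side by Lemma \ref{lem2}. Since $H(x)=A(2x)-8A(x)=-6A(x)$ is again additive and $G(x)=C(2x)-2C(x)=6C(x)$ is again cubic, and since (\ref{1.4}) is linear in $f$, the sum $f=H+G$ solves (\ref{1.4}).

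For the necessity, the first step is to put $x=y=0$ in (\ref{1.4}) to get $f(0)=0$. Next I would observe that $x\mapsto f(-x)$ again satisfies (\ref{1.4}) (replace $(x,y)$ by $(-x,-y)$), so by linearity both the even part $f_e(x)=\frac{1}{2}[f(x)+f(-x)]$ and the odd part $f_o(x)=\frac{1}{2}[f(x)-f(-x)]$ satisfy (\ref{1.4}). Evaluating (\ref{1.4}) for $f_e$ at $x=0$ and at $y=0$ gives $f_e(3y)=3f_e(y)+4f_e(2y)$ and $f_e(3x)=4f_e(2x)-5f_e(x)$; comparing these two expressions for $f_e(3x)$ forces $8f_e\equiv0$, so $f_e\equiv0$ and $f$ is odd. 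From here I may freely use $f(-x)=-f(x)$.

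The core step is the explicit splitting. I would set $g(x):=f(2x)-8f(x)$ and $h(x):=f(2x)-2f(x)$, so that $h(x)-g(x)=6f(x)$ and hence $f=\frac{1}{6}(h-g)$. Putting $y=0$ and $y=x$ in (\ref{1.4}) and using oddness yields the auxiliary relations $f(3x)=4f(2x)-5f(x)$ and $f(4x)=10f(2x)-16f(x)$. I would then check that $g$ satisfies (\ref{2.1}) and that $h$ satisfies (\ref{2.28}): in each case one substitutes both $(x,y)$ and $(2x,2y)$ into (\ref{1.4}), rewrites the combination $3g(x+3y)-g(3x+y)$ (respectively the same combination for $h$) through these two instances, and observes that the discrepancy between the resulting expression and the desired right-hand side collapses, after inserting $f(4x)=10f(2x)-16f(x)$, to the single identity $40f(2x)-120f(2y)+12f(4y)-4f(4x)-64f(x)+192f(y)=0$, which holds identically. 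By Lemma \ref{lem1} the mapping $g$ is then additive and by Lemma \ref{lem2} the mapping $h$ is cubic. Finally I would take $A:=\frac{1}{36}g$ and $C:=\frac{1}{36}h$, which are additive and cubic, and set $H(x):=A(2x)-8A(x)=-\frac{1}{6}g(x)$ and $G(x):=C(2x)-2C(x)=\frac{1}{6}h(x)$; then $H+G=\frac{1}{6}(h-g)=f$, as required. I expect the main obstacle to be the bookkeeping in the two verifications that $g$ and $h$ solve (\ref{2.1}) and (\ref{2.28}); the whole simplification rests on the observation that both reduce to the same auxiliary identity displayed above.
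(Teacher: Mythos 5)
Your proof is correct, and its core is the same as the paper's: decompose $f$ through the combinations $g(x)=f(2x)-8f(x)$ and $h(x)=f(2x)-2f(x)$, show that these satisfy (\ref{2.1}) and (\ref{2.28}) respectively, and invoke Lemmas \ref{lem1} and \ref{lem2}. Your execution, however, differs in ways that actually repair the paper's argument, so the comparison is worth recording. The paper never proves that a solution of (\ref{1.4}) is odd; it simply sets $A(x)=C(x)=\frac{1}{2}(f(x)-f(-x))$ and works with the odd part, leaving the even part of $f$ unaccounted for, whereas your two evaluations $f_e(3x)=3f_e(x)+4f_e(2x)$ and $f_e(3x)=4f_e(2x)-5f_e(x)$, which force $8f_e\equiv 0$, close that gap. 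More importantly, the paper's forward direction ends by asserting $f=H+G$ with $H(x)=A(2x)-8A(x)$ and $G(x)=C(2x)-2C(x)$ built from the odd part; with those choices $H(x)+G(x)=2f(2x)-10f(x)$, which is not $f(x)$ in general --- the correct identity is $f=\frac{1}{6}[(f(2x)-2f(x))-(f(2x)-8f(x))]$, exactly the $f=\frac{1}{6}(h-g)$ you use, and your rescaling $A=\frac{1}{36}g$, $C=\frac{1}{36}h$ is precisely what makes the stated conclusion literally true while also exhibiting the additive $A$ and cubic $C$ whose existence the theorem claims. Finally, your verification that $g$ and $h$ satisfy (\ref{2.1}) and (\ref{2.28}) by expanding (\ref{1.4}) at $(x,y)$ and at $(2x,2y)$ checks out: both discrepancies reduce to $40f(2x)-120f(2y)+12f(4y)-4f(4x)-64f(x)+192f(y)$, which vanishes upon substituting $f(4x)=10f(2x)-16f(x)$. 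The paper's route to the same conclusion is lighter and worth knowing: since $x\mapsto f(2x)$ satisfies (\ref{1.4}), so do $g$ and $h$ by linearity, and the homogeneity relations $g(2x)=2g(x)$ and $h(2x)=8h(x)$ (obtained from the substitution $y=x$) turn $-16[g(x)+g(y)]+12g(2y)-4g(2x)$ into $-24g(x)+8g(y)$ and $-16[h(x)+h(y)]+12h(2y)-4h(2x)$ into $-48h(x)+80h(y)$, converting (\ref{1.4}) directly into (\ref{2.1}) and (\ref{2.28}) without any long cancellation.
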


\begin{proof}
Define the mappings $A,H,C,G : X \longrightarrow Y$ via 
$$A(x),C(x) := \frac{f(x)-f(-x)}{2}$$ and $$H(x):=A(2x)-8A(x), G(x):=C(2x)-2C(x)$$
for all $x \in X$. Then, we have 
$$A(0)=0,\,\, C(0)=0,\,\, A(-x)=-A(x),\,\, C(-x)=-C(x),$$
\begin{align}\label{2.29}
&3\mathfrak A(x+3y)-\mathfrak A(3x+y)\notag \\&\quad =12[\mathfrak A(x+y)+\mathfrak A(x-y)]-16[\mathfrak A(x)+\mathfrak A(y)]+12\mathfrak A(2y)-4\mathfrak A(2x)
\end{align}
for all $x,y \in X$ in which $\mathfrak A\in \{A,C,H,G\}$. First, we claim that $H$ is additive. Setting $(x,y)$ by $(x,x)$ in (\ref{2.29}), we have
\begin{align}\label{2.33}
A(4x)= 10A(2x)-16A(x)
\end{align}
for all $x \in X$. Using $H(x)=A(2x)-8A(x)$ in (\ref{2.33}), we get
$$H(2x)= 2H(x).$$
Therefore the equation (\ref{2.29}) when $\mathfrak A=H$  is reduced to the form 
\[
3H(x+3y)-H(3x+y)=12[H(x+y)+H(x-y)]-24H(x)+8H(y)
\]
for all $x,y \in X$ and hence Lemma \ref{lem1} guarantees that $H$ is additive. Secondly, we claim that $G$ is cubic. Letting $(x,y)$ by $(x,x)$ in (\ref{2.29}) for $\mathfrak A=C$, we obtain
\begin{align}\label{2.34}
C(4x)= 10C(2x)-16C(x)
\end{align}
for all $x \in X$. Using $G(x)=C(2x)-2C(x)$ in (\ref{2.34}), we have
$$G(2x)= 8G(x).$$
So the equation (\ref{2.29}) can be rewritten as  
\[
3G(x+3y) - G(3x + y) = 12[G(x+y) + G(x-y)]-48G(x)+80G(y)
\]
for all $x,y \in X$ when $\mathfrak A=G$. Now, Lemma \ref{lem2} implies that $G$ is cubic. Hence, $f:X \longrightarrow Y$ satifies the equation (\ref{1.4}), and thus $f(x)=H(x)+G(x)$ for all $x \in X$.
\par Conversely, suppose that there exist additve functions  $A,H:X \longrightarrow Y$ and a cubic functions  $C,G:X \longrightarrow Y$ such that $f(x)=H(x)+G(x)$ for all $x \in X$. We have
$$H(2x)= 2H(x)~~~~~~ \text{and} ~~~~~~~G(2x)= 8G(x)$$
for all $x \in X$.  It follows from Lemmas \ref{lem1} and \ref{lem2} that 
\begin{align*}
3&f(x+3y)-f(3x+y)-12[f(x+y)+f(x-y)]+16[f(x)+f(y)] \\
&- 12f(2y) + 4f(2x)=3H(x+3y) - H(3x + y)-12[H(x+y)+H(x-y)]\\
&+16[H(x)+H(y)] - 12H(2y) + 4H(2x)
+3G(x+3y) - G(3x + y)\\
&-12[G(x+y)+G(x-y)]+16[G(x)+G(y)] - 12G(2y) + 4G(2x)
\\&=3H(x+3y) - H(3x + y)-12[H(x+y)+H(x-y)]+24H(x)-8H(y)\\
&+3G(x+3y) - G(3x + y)-12[G(x+y)+G(x-y)]+48G(x)-80G(y)=0
\end{align*}
for all $x,y \in X$. This finishes the proof.
\end{proof}
\setcounter{equation}{0}
\section{Stability of the Functional Equation (\ref{1.4})}
From now on, we assume that $X$ is a normed space and $Y$ is a Banach space. For convenience, we use the following difference operator for a given mapping $ f:X \longrightarrow Y$ 
\[
\begin{array}{l}
 D_f(x,y)=3f(x+3y)-f(3x+ y) -12[f(x+y)+f(x-y)] \\
 \qquad \,\,\,\,\,\,\,\,\,\,\,\,+16[f(x)+f(y)]-12f(2y)+4f(2x)
 \end{array}
\]
for all $x,y\in X$. In the upcoming result, we investigate the generalized Hyers-Ulam-Rassias stability problem for functional equation (\ref{1.4}).


\begin{theorem}\label{th2}
Let $l\in\{-1,1\}$. Suppose that an odd mapping $f: X \longrightarrow Y$ satisfies
\begin{equation}\label{3.1}
\| D_f(x,y) \|  \leq \phi(x,y)
\end{equation}
for all $x,y \in X$. If $\phi:X \times X \longrightarrow [0,\infty )$ is a function such that
\begin{equation}\label{3.2}
\sum_{i=1}^{\infty} 2^{il}\phi\left(\frac{x}{2^{il}},\frac{x}{2^{il}}\right)<\infty
\end{equation}
for all $x \in X$ and that $\lim_n 2^{ln}\phi(\frac{x}{2^{ln}},\frac{y}{2^{ln}})=0$ for all $x,y \in X$, then the limit
\[
A(x) = \lim_{n\rightarrow\infty} 2^{ln}\left[f\left(\frac{x}{2^{l(n-l)}}\right)-8f\left(\frac{x}{2^{ln}}\right)\right] 
\]
exists for all $x \in X$, and $A: X \longrightarrow Y$ is a unique additive function satisfies \emph{(\ref{1.4})} and
\begin{equation}\label{3.3}
\left\| f(2x)-8f(x)-A(x) \right\| \leq \frac{1}{2}\sum\limits_{i =\frac{|l-1|}{2}}^{\infty} 2^{il}\phi\left(\frac{x}{2^{l(i+l)}},\frac{x}{2^{l(i+l)}}\right)
\end{equation}
for all $x \in X$.
\end{theorem}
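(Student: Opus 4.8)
The plan is to reduce \eqref{3.1} to an approximate additivity statement for the auxiliary mapping $g(x):=f(2x)-8f(x)$ and then run the classical Hyers direct method on $g$. First I would substitute $(x,y)=(x,x)$ into the operator $D_f$; since $f$ is odd we have $f(0)=0$, and a short computation collapses the right-hand side to $D_f(x,x)=2f(4x)-20f(2x)+32f(x)$. As $g(2x)-2g(x)=f(4x)-10f(2x)+16f(x)$, this quantity is exactly $\tfrac12 D_f(x,x)$, so \eqref{3.1} yields the basic estimate
\[
\|g(2x)-2g(x)\|\le\tfrac12\phi(x,x),
\]
which is the approximate form of the doubling relation $g(2x)=2g(x)$ obeyed by the genuine additive component $H(x)=A(2x)-8A(x)$ (and indeed $g$ is designed to annihilate the cubic part, since a cubic map satisfies $f(2x)=8f(x)$).

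Next I would apply the direct method to $g$, handling $l=1$ and $l=-1$ in parallel. For $l=1$, replacing $x$ by $x/2$ gives $\|g(x)-2g(x/2)\|\le\tfrac12\phi(x/2,x/2)$, and telescoping shows that $2^{n}g(x/2^{n})$ is Cauchy by \eqref{3.2}; since $Y$ is complete it converges to $A(x)=\lim_n 2^{n}[f(x/2^{n-1})-8f(x/2^{n})]$. For $l=-1$ one instead forms $2^{-n}g(2^{n}x)$ starting from $\|\tfrac12 g(2x)-g(x)\|\le\tfrac14\phi(x,x)$. Summing the telescoping differences produces the estimate \eqref{3.3}; the lower summation index $|l-1|/2$ simply encodes ``start at $0$ when $l=1$ and at $1$ when $l=-1$,'' and matching the index shifts there will be the most delicate piece of bookkeeping.

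To verify that $A$ solves \eqref{1.4} and is additive, I would exploit that $D$ is linear in its argument and that precomposing $f$ with $x\mapsto 2x$ is the same as evaluating $D_f$ at $(2x,2y)$; hence $D_g(x,y)=D_f(2x,2y)-8D_f(x,y)$ and $\|D_g(x,y)\|\le\phi(2x,2y)+8\phi(x,y)$. Passing to the limit in $A(x)=\lim_n 2^{ln}g(x/2^{ln})$ and invoking $\lim_n 2^{ln}\phi(x/2^{ln},y/2^{ln})=0$, applied both at $(x,y)$ and at $(2x,2y)$, forces $D_A(x,y)=0$, so $A$ satisfies \eqref{1.4}. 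A parallel limit on the basic estimate gives $A(2x)=2A(x)$; substituting this into \eqref{1.4} collapses it to \eqref{2.1}, and Lemma \ref{lem1} then guarantees that $A$ is additive.

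Finally, for uniqueness I would let $A'$ be another additive solution obeying \eqref{3.3}. Since additive maps are $\mathbb{Q}$-homogeneous, $A(x)-A'(x)=2^{lk}\bigl[A(x/2^{lk})-A'(x/2^{lk})\bigr]$, and bounding $\|A-g\|$ and $\|g-A'\|$ at the scaled argument by \eqref{3.3} shows that $\|A(x)-A'(x)\|$ is dominated by a tail of the convergent series in \eqref{3.2}; letting $k\to\infty$ yields $A=A'$. The only genuinely conceptual step is the opening substitution that exposes $g=f(2\cdot)-8f(\cdot)$ as an approximately additive map; the remaining obstacle is organizational, namely carrying both signs of $l$ through identical formulas and confirming that the $D_f(2x,2y)$ contribution to $D_g$ really vanishes in the limit, which requires the decay hypothesis at $(2x,2y)$ rather than merely at $(x,y)$.
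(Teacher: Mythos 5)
Your proposal is correct and follows essentially the same route as the paper: substitute $y=x$ to get $\|f(4x)-10f(2x)+16f(x)\|\le\tfrac12\phi(x,x)$, recognize this as $\|g(2x)-2g(x)\|\le\tfrac12\phi(x,x)$ for $g(x)=f(2x)-8f(x)$, run the direct Hyers method for both signs of $l$, show $D_A=0$ via $D_g(x,y)=D_f(2x,2y)-8D_f(x,y)$ and the decay hypothesis, deduce additivity from $A(2x)=2A(x)$ together with Lemma \ref{lem1}, and prove uniqueness by rescaling. Your write-up is in fact slightly more explicit than the paper's at the step where satisfying \eqref{1.4} plus the doubling relation is converted into additivity via Lemma \ref{lem1}.
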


\begin{proof}
Replacing $y$ by $x$ in (\ref{3.1}), we have
\begin{equation}\label{3.4}
\left\| f(4x)-10f(2x)+16f(x) \right\| \leq \frac{1}{2}\phi(x,x)
\end{equation}
for all $x \in X$.  By (\ref{3.4}) we have
\begin{equation}\label{3.5}
\left\| H(2x)-2H(x)\right\| \leq \frac{1}{2}\phi(x,x)
\end{equation}
for all $x \in X$, where $H(x)=f(2x)-8f(x)$. It follows from (\ref{3.5})
 \begin{equation}\label{3.6}
\left\| H(x)-2^lH\left(\frac{x}{2^l}\right)\right\| \leq \frac{1}{2^{\frac{|l-1|+2}{2}}}\phi\left(\frac{x}{2^{\frac{l+1}{2}}},\frac{x}{2^{\frac{l+1}{2}}}\right)
\end{equation}
for all $x \in X$. Again, by switching $x$ to $\frac{x}{2^l}$ in (\ref{3.6}) and combining the resultant equation with (\ref{3.6}), we get
 \begin{equation*}
\left\| H(x)-2^{2l}H\left(\frac{x}{2^{2l}}\right)\right\| \leq \frac{1}{2}\,\sum\limits_{i = \frac{|l-1|}{2}}^{2-\frac{|l+1|}{2}} 2^{il}\phi\left(\frac{x}{2^{l(i+l)}},\frac{x}{2^{l(i+l)}}\right)
\end{equation*}
for all $x \in X$. An induction argument now implies that 
\begin{equation}\label{3.7}
\left\| H(x)-2^{ln}H\left(\frac{x}{2^{ln}}\right)\right\| \leq \frac{1}{2}\,\sum\limits_{i = \frac{|l-1|}{2}}^{n-\frac{|l+1|}{2}} 2^{il}\phi\left(\frac{x}{2^{l(i+l)}},\frac{x}{2^{l(i+l)}}\right)
\end{equation}
for all $x \in X$. Multiplying both sides of inequality (\ref{3.7}) by $2^{lm}$ and $x$ by $\frac{x}{2^{lm}}$, we get
\begin{align*}
\left\| 2^{lm}H\left(\frac{x}{2^{ml}}\right)-2^{(m+n)l}H\left(\frac{x}{2^{(m+n)l}}\right) \right\|& \leq \frac{1}{2}\,\sum_{i=\frac{|l-1|}{2}}^{n-\frac{|l+1|}{2}} 2^{(i+m)l}\phi\left(\frac{x}{2^{(i+m+l)l}},\frac{x}{2^{(i+m+l)l}}\right)\\ 
&=\frac{1}{2}\,\sum_{i=m+\frac{|l-1|}{2}}^{m+n-\frac{|l+1|}{2}} 2^{il}\phi\left(\frac{x}{2^{(i+l)l}},\frac{x}{2^{(i+l)l}}\right).
\end{align*}
Since the right hand side of the above inequality tends to $0$ as $m \to \infty$, the sequence $\{2^{ln}H(\frac{x}{2^{ln}})\}$
is Cauchy. Then the limit 
$$A(x)=\lim_{n\rightarrow\infty} 2^{ln}H\left(\frac{x}{2^{ln}}\right)=\lim_{n\rightarrow\infty} 2^{ln}\left(f\left(\frac{x}{2^{l(n-l)}}\right)-8f\left(\frac{x}{2^{ln}}\right)\right)$$
exist for all $x \in X$. On the other hand, we have
\begin{align}\label{3.8}
\nonumber&\left\| A(2x)-2A(x)\right\|\\
\nonumber&=\lim_{n\rightarrow\infty}\left[2^{ln}\left(f\left(\frac{2x}{2^{(n-l)l}}\right)-8f\left(\frac{2x}{2^{ln}}\right)\right)-2\left(2^{ln}\left(f\left(\frac{x}{2^{(n-l)l}}\right)-8f\left(\frac{x}{2^{ln}}\right)\right)\right)\right]\\ 
\nonumber&=\lim_{n\rightarrow\infty}\left[2^{ln}H\left(\frac{x}{2^{l(n-l)}}\right)-2^{l(n+l)}H\left(\frac{x}{2^{ln}}\right)\right]\\
&=2\lim_{n\rightarrow\infty}\left[2^{l(n-l)}H\left(\frac{x}{2^{l(n-l)}}\right)-2^{ln}H\left(\frac{x}{2^{ln}}\right)\right]=0
\end{align}
for all $x \in X$. Let $D_H(x,y)=D_f(2^lx,2^ly)-8D_f(x,y)$ for all $x \in X$. Then we have
\begin{align*}
D_A(x,y)&=\lim_{n\rightarrow\infty}\left\|2^{ln} D_H\left(\frac{x}{2^{ln}},\frac{y}{2^{ln}}\right)\right\|\\
&=\lim_{n\rightarrow\infty}2^{ln}\left\|D_f\left(\frac{x}{2^{l(n-l)}},\frac{y}{2^{l(n-l)}}\right)-8D_f\left(\frac{x}{2^{ln}},\frac{y}{2^{ln}}\right)\right\|\\
&\leq \lim_{n\rightarrow\infty} 2\left\|2^{l(n-l)} D_f\left(\frac{x}{2^{l(n-l)}},\frac{y}{2^{l(n-l)}}\right)\right\|+\lim_{n\rightarrow\infty} 8\left\|2^{ln} D_f\left(\frac{x}{2^{ln}},\frac{y}{2^{ln}}\right)\right\|\\
&\leq 2 \lim_{n\rightarrow\infty} 2^{l(n-l)} \phi\left(\frac{x}{2^{l(n-l)}},\frac{y}{2^{l(n-l)}}\right)+8 \lim_{n\rightarrow\infty} 2^{ln} \phi\left(\frac{x}{2^{ln}},\frac{y}{2^{ln}}\right)=0.
\end{align*}
This means that $A$ satisfies (\ref{1.3}). By (\ref{3.8}), it follows that $A$ is additive. It remains to show that $A$ is unique additive function which satisfies (\ref{3.3}). Suppose that there exists another additive function $A':X \longrightarrow Y$ satisfies (\ref{3.3}). We have $A(2^{ln}x)=2^{ln}A(x)$ and $A'(2^{ln}x)=2^{ln}A'(x)$ for all $x \in X$. The last equalities imply that
\begin{align*}
&\left\| A(x)-A'(x) \right\|\\
&\quad=2^{ln}\left\|A\left(\frac{x}{2^{ln}}\right)-A'\left(\frac{x}{2^{ln}}\right)\right\|\\
&\quad\leq 2^{ln}\left[\left\|A\left(\frac{x}{2^{ln}}\right)-f\left(\frac{2x}{2^{ln}}\right)-8f\left(\frac{x}{2^{ln}}\right)\right\|
+\left\|A'\left(\frac{x}{2^{ln}}\right)-f\left(\frac{2x}{2^{ln}}\right)-8f\left(\frac{x}{2^{ln}}\right)\right\|\right]\\&\quad
\leq \sum_{i=n+\frac{|l-1|}{2}}^{\infty} 2^{il} \phi\left(\frac{x}{2^{l(i+l)}},\frac{x}{2^{l(i+l)}}\right)
\end{align*}
for all $x \in X$. Taking $n \to \infty$, we see that the right hand side of above inequality tends to $0$. Thus, $A(x)=A'(x)$ for all $x \in X$. This completes the proof.
\end{proof}

We have the following result which is analogous to Theorem \ref{th2} for another case of $f$. The method is similar but we bring it.

\begin{theorem}\label{th3}
Let $l\in\{-1,1\}$. Suppose that an odd mapping $f: X \longrightarrow Y$ satisfying
\begin{equation}\label{3.9}
\| D_f(x,y) \|  \leq \phi(x,y)
\end{equation}
for all $x,y \in X$. If $\phi:X \times X \longrightarrow [0,\infty )$ is a function such that
\begin{equation}\label{3.10}
\sum_{i=1}^{\infty} 8^{il}\phi\left(\frac{x}{2^{il}},\frac{x}{2^{il}}\right)<\infty
\end{equation}
for all $x \in X$ and that $\lim_n 8^{ln}\phi(\frac{x}{2^{ln}},\frac{y}{2^{ln}})=0$ for all $x,y \in X$, then the limit
\[
C(x) = \lim_n 8^{ln}\left[f\left(\frac{x}{2^{l(n-l)}}\right)-2f\left(\frac{x}{2^{ln}}\right)\right] 
\]
exists for all $x \in X$, and $C: X \longrightarrow Y$ is a unique cubic mapping satisfies \emph{(\ref{1.4})} and also
\begin{equation}\label{3.11}
\left\| f(2x)-2f(x)-C(x) \right\| \leq \frac{1}{2}\sum\limits_{i =\frac{|l-1|}{2}}^{\infty} 8^{il}\phi\left(\frac{x}{2^{l(i+l)}},\frac{x}{2^{l(i+l)}}\right)
\end{equation}
for all $x \in X$.
\end{theorem}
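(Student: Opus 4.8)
The plan is to mirror the argument of Theorem~\ref{th2} almost verbatim, replacing the additive auxiliary map $H(x)=f(2x)-8f(x)$ and its scaling factor $2^{ln}$ by the cubic auxiliary map $C_{0}(x):=f(2x)-2f(x)$ and the factor $8^{ln}$, which reflects the cubic homogeneity $C(2x)=8C(x)$. First I would set $y=x$ in (\ref{3.9}); by oddness of $f$ one has $f(0)=0$, so the operator collapses to $D_f(x,x)=2[f(4x)-10f(2x)+16f(x)]$, whence
\[
\left\|f(4x)-10f(2x)+16f(x)\right\|\le\tfrac12\phi(x,x)
\]
for all $x\in X$, exactly as in (\ref{3.4}). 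The key algebraic observation is that $C_{0}(2x)-8C_{0}(x)=f(4x)-10f(2x)+16f(x)$, so the inequality above reads $\left\|C_{0}(2x)-8C_{0}(x)\right\|\le\tfrac12\phi(x,x)$; this is the cubic analogue of (\ref{3.5}), and it is the single line where the coefficient $8$ (rather than $2$) enters.

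From here I would run the same iteration. Replacing $x$ by $x/2^{l}$ and scaling by $8^{l}$ gives the one-step estimate analogous to (\ref{3.6}), and induction on $n$ then yields
\[
\left\|C_{0}(x)-8^{ln}C_{0}\Bigl(\tfrac{x}{2^{ln}}\Bigr)\right\|\le\frac12\sum_{i=\frac{|l-1|}{2}}^{n-\frac{|l+1|}{2}}8^{il}\phi\Bigl(\tfrac{x}{2^{l(i+l)}},\tfrac{x}{2^{l(i+l)}}\Bigr)
\]
for all $x\in X$, the counterpart of (\ref{3.7}). Multiplying by $8^{lm}$ and replacing $x$ by $x/2^{lm}$ turns the right-hand side into a tail of the series in (\ref{3.10}), which tends to $0$ as $m\to\infty$; hence $\{8^{ln}C_{0}(x/2^{ln})\}$ is Cauchy in the Banach space $Y$, so the limit $C(x)$ exists. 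Since $l^{2}=1$ gives $f(x/2^{l(n-l)})=f(2x/2^{ln})$, one has $8^{ln}C_{0}(x/2^{ln})=8^{ln}[f(x/2^{l(n-l)})-2f(x/2^{ln})]$, matching the limit in the statement, and letting $n\to\infty$ in the induction bound yields (\ref{3.11}).

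To identify $C$, I would first establish the cubic scaling $C(2x)=8C(x)$ by the telescoping computation that produced (\ref{3.8}), now with the factor $8$ in place of $2$. Next, writing $D_{C_{0}}(x,y)=D_f(2x,2y)-2D_f(x,y)$ and using $8^{ln}=8\cdot8^{l(n-l)}$, I would bound
\[
D_{C}(x,y)\le\lim_{n\to\infty}8\cdot8^{l(n-l)}\phi\Bigl(\tfrac{x}{2^{l(n-l)}},\tfrac{y}{2^{l(n-l)}}\Bigr)+\lim_{n\to\infty}2\cdot8^{ln}\phi\Bigl(\tfrac{x}{2^{ln}},\tfrac{y}{2^{ln}}\Bigr)=0,
\]
the two limits vanishing by the decay hypothesis $\lim_{n}8^{ln}\phi(x/2^{ln},y/2^{ln})=0$; thus $C$ satisfies (\ref{1.4}). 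Combined with $C(2x)=8C(x)$, equation (\ref{1.4}) for $C$ reduces to the defining cubic equation (\ref{2.28}), so Lemma~\ref{lem2} shows $C$ is cubic. Uniqueness follows as in Theorem~\ref{th2}: if $C'$ is another cubic solution obeying (\ref{3.11}), then $C(2^{ln}x)=8^{ln}C(x)$ and likewise for $C'$, so a triangle-inequality estimate bounds $\left\|C(x)-C'(x)\right\|$ by the convergent tail $\sum_{i\ge n+\frac{|l-1|}{2}}8^{il}\phi(x/2^{l(i+l)},x/2^{l(i+l)})$, which vanishes as $n\to\infty$. The routine parts here are the algebraic identities and the telescoping; the step requiring genuine care is the index bookkeeping in the summation limits, namely arranging the offsets $\tfrac{|l-1|}{2}$ and $\tfrac{|l+1|}{2}$ so that a single induction covers both the division direction $l=1$ and the multiplication direction $l=-1$.
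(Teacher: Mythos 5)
Your proposal is correct and follows essentially the same route as the paper: set $y=x$ to get $\|f(4x)-10f(2x)+16f(x)\|\le\tfrac12\phi(x,x)$, rewrite this as a contraction for the auxiliary map $f(2x)-2f(x)$ with factor $8$, telescope to build the Cauchy sequence $\{8^{ln}G(x/2^{ln})\}$, verify $C(2x)=8C(x)$ and $D_C=0$ via the operator $D_f(2^lx,2^ly)-2D_f(x,y)$, and prove uniqueness by the tail estimate. Your explicit appeal to Lemma~\ref{lem2} to conclude that $C$ is cubic is in fact slightly more careful than the paper's terse ``(\ref{3.16}) shows that $C$ is cubic,'' but the argument is the same.
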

\begin{proof}
Interchanging $y$ into $x$ in (\ref{3.9}), we have
\begin{equation}\label{3.12}
\left\| f(4x)-10f(2x)+16f(x) \right\| \leq \frac{1}{2}\phi(x,x)
\end{equation}
for all $x \in X$. Put $G(x)=f(2x)-2f(x)$ for all $x \in X$. By (\ref{3.12}), we obtain
\begin{equation}\label{3.13}
\left\| G(2x)-8G(x)\right\| \leq \frac{1}{2}\phi(x,x)
\end{equation}
for all $x \in X$. Replacing $x$ by $\frac{x}{2}$ in (\ref{3.13}), we get
 \begin{equation}\label{3.14}
\left\| G(x)-8^lG\left(\frac{x}{2^l}\right)\right\| \leq \frac{1}{2}\frac{1}{8^{\frac{|l-1|}{2}}}\phi\left(\frac{x}{2^{\frac{l+1}{2}}},\frac{x}{2^{\frac{l+1}{2}}}\right)
\end{equation}
for all $x \in X$. Once more, by replacing $x$ by $\frac{x}{2^l}$ in (\ref{3.14}) and combining the resultant equation with (\ref{3.14}), we deduce that
 \begin{equation*}
\left\| G(x)-8^{2l}G\left(\frac{x}{2^{2l}}\right)\right\| \leq \frac{1}{2}\,\sum\limits_{i = \frac{|l-1|}{2}}^{2-\frac{|l+1|}{2}} 8^{il}\phi\left(\frac{x}{2^{l(i+l)}},\frac{x}{2^{l(i+l)}}\right)
\end{equation*}
for all $x \in X$. The above process can be repeated to obtain 
\begin{equation}\label{3.15}
\left\| G(x)-8^{ln}G\left(\frac{x}{2^{ln}}\right)\right\| \leq \frac{1}{2}\,\sum\limits_{i = \frac{|l-1|}{2}}^{n-\frac{|l+1|}{2}} 8^{il}\phi\left(\frac{x}{2^{l(i+l)}},\frac{x}{2^{l(i+l)}}\right)
\end{equation}
for all $x \in X$. Multiplying both sides of inequality (\ref{3.15}) by $8^{lm}$ and $x$ by $\frac{x}{2^{lm}}$, we have
\begin{align*}
\left\| 8^{lm}G\left(\frac{x}{2^{ml}}\right)-8^{(m+n)l}G\left(\frac{x}{2^{(m+n)l}}\right) \right\|& \leq \frac{1}{2}\,\sum_{i=\frac{|l-1|}{2}}^{n-\frac{|l+1|}{2}} 8^{(i+m)l}\phi\left(\frac{x}{2^{(i+m+l)l}},\frac{x}{2^{(i+m+l)l}}\right)\\ 
&=\frac{1}{2}\,\sum_{i=m+\frac{|l-1|}{2}}^{m+n-\frac{|l+1|}{2}} 8^{il}\phi\left(\frac{x}{2^{(i+l)l}},\frac{x}{2^{(i+l)l}}\right).
\end{align*}
Letting $m \to \infty$ in the above relation, we see that $\{8^{n}G(\frac{x}{2^n})\}$ is a Cauchy sequence. Due to the completeness of $Y$, this sequence is convergent to $C(x)$. In other words,

$$C(x)=\lim_{n\rightarrow\infty} 8^{ln}G\left(\frac{x}{2^{ln}}\right)=\lim_{n\rightarrow\infty} 8^{ln}\left(f\left(\frac{x}{2^{l(n-l)}}\right)-2f\left(\frac{x}{2^{ln}}\right)\right)\quad(x \in X).$$

We now have
\begin{align}\label{3.16}
\nonumber&\left\| C(2x)-8C(x)\right\|\\
\nonumber&=\lim_{n\rightarrow\infty}\left[8^{ln}\left(f\left(\frac{2x}{2^{(n-l)l}}\right)-2f\left(\frac{2x}{8^{ln}}\right)\right)-8\left(8^{ln}\left(f\left(\frac{x}{2^{(n-l)l}}\right)-2f\left(\frac{x}{2^{ln}}\right)\right)\right)\right]\\ 
\nonumber&=\lim_{n\rightarrow\infty}\left[8^{ln}G\left(\frac{x}{2^{l(n-l)}}\right)-8^{l(n+l)}G\left(\frac{x}{2^{ln}}\right)\right]\\
&=8\lim_{n\rightarrow\infty}\left[8^{l(n-l)}G\left(\frac{x}{2^{l(n-l)}}\right)-8^{ln}G\left(\frac{x}{2^{ln}}\right)\right]=0
\end{align}
for all $x \in X$. Putting $D_G(x,y)=D_f(2^lx,2^ly)-2D_f(x,y)$ for all $x \in X$, we obtain
\begin{align*}
D_C(x,y)&=\lim_{n\rightarrow\infty}\left\|8^{ln} D_G\left(\frac{x}{2^{ln}},\frac{y}{2^{ln}}\right)\right\|\\
&=\lim_{n\rightarrow\infty}8^{ln}\left\|D_f\left(\frac{x}{2^{l(n-l)}},\frac{y}{2^{l(n-l)}}\right)-2D_f\left(\frac{x}{2^{ln}},\frac{y}{2^{ln}}\right)\right\|\\
&\leq \lim_{n\rightarrow\infty} 8\left\|8^{l(n-l)} D_f\left(\frac{x}{2^{l(n-l)}},\frac{y}{2^{l(n-l)}}\right)\right\|+\lim_{n\rightarrow\infty} 2\left\|8^{ln} D_f\left(\frac{x}{2^{ln}},\frac{y}{2^{ln}}\right)\right\|\\
&\leq 8 \lim_{n\rightarrow\infty} 8^{l(n-l)} \phi\left(\frac{x}{2^{l(n-l)}},\frac{y}{2^{l(n-l)}}\right)+2 \lim_{n\rightarrow\infty} 8^{ln} \phi\left(\frac{x}{2^{ln}},\frac{y}{2^{ln}}\right)=0.
\end{align*}
Hence, $C$ satisfies (\ref{1.4}). Now the relation (\ref{3.16}) showes that $C$ is cubic. If there exists a cubic mapping $C':X \longrightarrow Y$ satisfies (\ref{3.11}), then we have
\begin{align*}
&\left\| C(x)-C'(x) \right\|\\
&\quad=8^{ln}\left\|C\left(\frac{x}{2^{ln}}\right)-C'\left(\frac{x}{2^{ln}}\right)\right\|\\
&\quad\leq 8^{ln}\left[\left\|C\left(\frac{x}{2^{ln}}\right)-f\left(\frac{2x}{2^{ln}}\right)-2f\left(\frac{x}{2^{ln}}\right)\right\|
+\left\|C'\left(\frac{x}{2^{ln}}\right)-f\left(\frac{2x}{2^{ln}}\right)-2f\left(\frac{x}{2^{ln}}\right)\right\|\right]\\&\quad
\leq \sum_{i=n+\frac{|l-1|}{2}}^{\infty} 8^{il} \phi\left(\frac{x}{2^{l(i+l)}},\frac{x}{2^{l(i+l)}}\right)
\end{align*}
for all $x \in X$. The right hand side of above inequality goes to $0$ as $n \to \infty$. Thus $C$ is a unique mapping.
\end{proof}
The next result shows that under which conditions a mixed type additive and cubic functional equation can be stable.
\begin{theorem}\label{th4}
Let $l\in\{-1,1\}$. Let $f: X \longrightarrow Y$ be an odd mapping  satisfying
\begin{equation}\label{3.17}
\| D_f(x,y) \|  \leq \phi(x,y)
\end{equation}
for all $x,y \in X$. Suppose that $\phi:X \times X \rightarrow [0, \infty )$ is a function such that
\begin{equation}\label{3.18}
\sum_{i=1}^{\infty} 2^{il}\phi\left(\frac{x}{2^{il}},\frac{x}{2^{il}}\right)<\infty \,\,and\,\, \sum_{i=1}^{\infty} 8^{il}\phi\left(\frac{x}{2^{il}},\frac{x}{2^{il}}\right)<\infty
\end{equation}
for all $x \in X$ and that $\lim_n 2^{ln}\phi(\frac{x}{2^{ln}},\frac{y}{2^{ln}})=0$ and $\lim_n 8^{ln}\phi(\frac{x}{2^{ln}},\frac{y}{2^{ln}})=0$ for all $x,y \in X$. Then, there exist a unique additive function $A: X \longrightarrow Y$ and a unique cubic function $C: X \longrightarrow Y$ such that
\begin{equation}\label{3.19}
\left\| f(x)-A(x)-C(x) \right\| \leq \frac{1}{12}\left[\sum\limits_{i =\frac{|l-1|}{2}}^{\infty} (2^{il}+8^{il})\phi\left(\frac{x}{2^{l(i+l)}},\frac{x}{2^{l(i+l)}}\right)\right]
\end{equation}
for all $x \in X$.
\end{theorem}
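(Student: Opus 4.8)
The plan is to obtain Theorem \ref{th4} by \emph{combining} Theorems \ref{th2} and \ref{th3}, observing that the two hypotheses in (\ref{3.18}) are exactly the summability conditions (\ref{3.2}) and (\ref{3.10}) demanded by those results. I keep the auxiliary maps $H(x)=f(2x)-8f(x)$ and $G(x)=f(2x)-2f(x)$ used in their proofs. The algebraic engine is the trivial identity $G(x)-H(x)=6f(x)$, so that $f=\tfrac16(G-H)$. Applying Theorem \ref{th2} yields a unique additive map $A_0$ with $\|H(x)-A_0(x)\|\le\tfrac12\sum_{i\ge\frac{|l-1|}{2}}2^{il}\phi(\tfrac{x}{2^{l(i+l)}},\tfrac{x}{2^{l(i+l)}})$, i.e. (\ref{3.3}), and Theorem \ref{th3} yields a unique cubic map $C_0$ satisfying the analogous bound (\ref{3.11}) with $2^{il}$ replaced by $8^{il}$.

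For existence I would set $A:=-\tfrac16A_0$ and $C:=\tfrac16C_0$, which remain additive and cubic respectively. From $f=\tfrac16(G-H)$ one gets the pointwise identity $f(x)-A(x)-C(x)=\tfrac16[(G(x)-C_0(x))-(H(x)-A_0(x))]$, so the triangle inequality together with the two bounds above gives at once $\|f(x)-A(x)-C(x)\|\le\tfrac16\cdot\tfrac12[\sum 8^{il}\phi(\cdots)+\sum 2^{il}\phi(\cdots)]$, which is precisely (\ref{3.19}). This half needs no estimate beyond those already established.

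The main obstacle is uniqueness, i.e. showing the additive and cubic summands are \emph{individually} pinned down. Let $(A',C')$ be another admissible pair, so $\|f-A'-C'\|\le B$ with $B$ the right side of (\ref{3.19}), and set $e:=f-A'-C'$. Using $A'(2x)=2A'(x)$ and $C'(2x)=8C'(x)$, a one-line computation gives $f(2x)-8f(x)+6A'(x)=e(2x)-8e(x)$ and $f(2x)-2f(x)-6C'(x)=e(2x)-2e(x)$; hence $-6A'$ is an additive approximant of $H$ and $6C'$ a cubic approximant of $G$. The difficulty is that these approximants are controlled by $B$, which mixes the $2^{il}$ and $8^{il}$ scales, rather than by the clean single-scale bounds (\ref{3.3}) and (\ref{3.11}), so I cannot simply invoke the uniqueness clauses of Theorems \ref{th2} and \ref{th3}.

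I would resolve this by re-running their scaling arguments with the mixed bound. Writing $\|A_0(x)-(-6A')(x)\|=2^{ln}\|A_0(\tfrac{x}{2^{ln}})+6A'(\tfrac{x}{2^{ln}})\|$ and inserting $H(\tfrac{x}{2^{ln}})$, the right side is dominated by a tail of $\tfrac12\sum 2^{il}\phi$ plus a term of the form $2^{ln}B(\tfrac{x}{2^{ln}})$. The single-scale tails vanish as $n\to\infty$ just as in the earlier proofs; the genuinely new point is the cross-scale piece, e.g. the $8^{il}$-part of $2^{ln}B(\tfrac{x}{2^{ln}})$, which after reindexing $j=i+n$ takes the shape $\tfrac{2^{ln}}{8^{ln}}\sum_j 8^{jl}\phi(\tfrac{x}{2^{l(j+l)}})$ and is killed using the \emph{other} summability hypothesis in (\ref{3.18}) — absorbing the scale mismatch via the prefactor $4^{-ln}\to0$ when $l=1$, or the elementary bound $2^{2(n-j)}\le1$ for $j\ge n$ when $l=-1$. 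This is exactly why both conditions are imposed. The same device with the roles of $2$ and $8$ interchanged gives $6C'=C_0$, whence $A'=A$ and $C'=C$, completing the proof.
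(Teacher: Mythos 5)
Your existence argument is exactly the paper's: the authors likewise invoke Theorems \ref{th2} and \ref{th3} to get $A_0$ and $C_0$ satisfying (\ref{3.20}) and (\ref{3.21}), subtract the two estimates via the identity $(f(2x)-2f(x))-(f(2x)-8f(x))=6f(x)$, and set $A=-\tfrac16A_0$, $C=\tfrac16C_0$ to obtain (\ref{3.19}). Where you genuinely diverge is that the published proof \emph{stops there}: it never establishes the uniqueness of the pair $(A,C)$ asserted in the statement, implicitly leaning on the uniqueness clauses of Theorems \ref{th2} and \ref{th3}, which, as you rightly note, do not apply directly because a competing pair $(A',C')$ is controlled only by the mixed two-scale bound of (\ref{3.19}) rather than by the single-scale bounds (\ref{3.3}) and (\ref{3.11}). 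Your repair is sound: from $\|f-A'-C'\|\le B$ and the homogeneities $A'(2x)=2A'(x)$, $C'(2x)=8C'(x)$ one gets $\|H(x)+6A'(x)\|\le B(2x)+8B(x)$, and in the rescaled quantity $2^{ln}B(x/2^{ln})$ the same-scale part is a vanishing tail of the convergent $2^{il}$-series while the cross term $4^{-ln}\sum_{j\ge n+\frac{|l-1|}{2}}8^{jl}\phi(\frac{x}{2^{l(j+l)}},\frac{x}{2^{l(j+l)}})$ dies for $l=1$ via the decaying prefactor times a bounded tail, and for $l=-1$ via $4^{n}8^{-j}=2^{2(n-j)}2^{-j}\le 2^{-j}$ for $j\ge n$, which folds it into a tail of the $2$-scale series; the symmetric computation gives $6C'=C_0$, hence $A'=A$ and $C'=C$. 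So your proposal is correct, reproduces the paper's existence step, and in addition supplies the uniqueness argument that the paper omits; the only things left to say explicitly are that $B(2x/2^{ln})$ is handled by the identical estimate and that both hypotheses in (\ref{3.18}) are indeed needed precisely for these cross-scale absorptions.
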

\begin{proof}
By Theorem \ref{th2} and Theorem \ref{th3}, there exist a unique additive mapping $A_0: X \longrightarrow Y$ and a unique cubic mapping $C_0: X \longrightarrow Y$ such that 
\begin{equation}\label{3.20}
\left\| f(2x)-8f(x)-A_0(x)\right\| \leq \frac{1}{2}\sum\limits_{i =\frac{|l-1|}{2}}^{\infty} 2^{il}\phi\left(\frac{x}{2^{l(i+l)}},\frac{x}{2^{l(i+l)}}\right)
\end{equation}
and
\begin{equation}\label{3.21}
\left\| f(2x)-2f(x)-C_0(x)\right\| \leq \frac{1}{2}\sum\limits_{i =\frac{|l-1|}{2}}^{\infty} 8^{il}\phi\left(\frac{x}{2^{l(i+l)}},\frac{x}{2^{l(i+l)}}\right)
\end{equation}
for all $x \in X$. Plugging (\ref{3.20}) into (\ref{3.21}) to obtain 
\begin{equation}\label{3.22}
\left\| f(x)+\frac{1}{6}A_0(x)-\frac{1}{6}C_0(x)\right\| \leq \frac{1}{12}\left[\sum\limits_{i =\frac{|l-1|}{2}}^{\infty} (2^{il}+8^{il})\phi\left(\frac{x}{2^{l(i+l)}},\frac{x}{2^{l(i+l)}}\right)\right]
\end{equation}
Putting $A(x)=-\frac{1}{6}A_0(x)$ and $C(x)=\frac{1}{6}C_0(x)$ in (\ref{3.22}), we get (\ref{3.19}). 
\end{proof}


In the following corollaries, we establish the Hyers-Ulam-Rassias stability problem for functional equation (\ref{1.4}).
\begin{corollary}
Let $p$ and $\theta$ be nonnegative integer numbers with $p\neq 1,3$. Suppose that an odd mapping $f: X \longrightarrow Y$ satisfies
\begin{align*}
\left\|D_f (x,y)\right\| \leq \theta\left(\left\|x\right\|^p + \left\|y\right\|^p\right)
\end{align*}
for all $x,y \in X$. Then there exist a unique additive function $A: X \longrightarrow Y$ and a unique cubic function $C: X \longrightarrow Y$ satisfying 
\begin{align*}
\left\|f(x)-A(x)-C(x)\right\| \leq \frac{\theta}{6}\left[\frac{1}{|2^{p}-2|} + \frac{1}{|2^{p}-8|}\right]\left\|x\right\|^p
\end{align*}
for all $x \in X$.
\begin{proof}
The results follows from Theorem \ref{th4} by taking $\phi(x,y) = \theta\left(\left\|x\right\|^p + \left\|y\right\|^p\right)$.
\end{proof}
\end{corollary}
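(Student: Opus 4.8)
The plan is to deduce the corollary from Theorem \ref{th4} by specializing the control function to $\phi(x,y)=\theta(\|x\|^p+\|y\|^p)$. The first task is to verify that this $\phi$ satisfies the hypotheses of Theorem \ref{th4}, namely the convergence of the two series in \eqref{3.18} together with the two limit conditions $\lim_n 2^{ln}\phi(x/2^{ln},y/2^{ln})=0$ and $\lim_n 8^{ln}\phi(x/2^{ln},y/2^{ln})=0$. Since all of these reduce to geometric series, this is exactly the place where the hypothesis $p\neq 1,3$ and the correct choice of $l$ come into play.

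Concretely, I would first record that
\[
\phi\left(\frac{x}{2^{il}},\frac{x}{2^{il}}\right)=\frac{2\theta\|x\|^p}{2^{ilp}},
\]
so that $2^{il}\phi(x/2^{il},x/2^{il})=2\theta\|x\|^p\,2^{il(1-p)}$ and $8^{il}\phi(x/2^{il},x/2^{il})=2\theta\|x\|^p\,2^{il(3-p)}$. These are geometric in $i$ with ratios $2^{l(1-p)}$ and $2^{l(3-p)}$, so the series in \eqref{3.18} converge precisely when $l(1-p)<0$ and $l(3-p)<0$; the two limit conditions then hold for the same reason. Accordingly I take $l=1$ when $p>3$ and $l=-1$ when $p<1$. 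The remaining admissible value $p=2$ (that is, $1<p<3$) cannot make both series converge for a single $l$, so there I instead invoke Theorem \ref{th2} with $l=1$ to build the additive summand and Theorem \ref{th3} with $l=-1$ to build the cubic summand, and I combine them exactly as in the proof of Theorem \ref{th4}.

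Next I would evaluate the two estimating series. Inserting the expressions above into \eqref{3.3} and summing, the additive estimate \eqref{3.20} collapses to
\[
\|f(2x)-8f(x)-A_0(x)\|\le \frac{\theta}{|2^p-2|}\,\|x\|^p,
\]
the absolute value absorbing the sign of $2^p-2$ no matter which $l$ is used; likewise \eqref{3.21} becomes $\|f(2x)-2f(x)-C_0(x)\|\le \theta\|x\|^p/|2^p-8|$. Following the proof of Theorem \ref{th4}, I subtract the two inequalities, use the identity $(f(2x)-8f(x))-(f(2x)-2f(x))=-6f(x)$, divide by $6$, and set $A=-\tfrac{1}{6}A_0$ and $C=\tfrac{1}{6}C_0$. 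Uniqueness of $A$ and $C$ is inherited from Theorems \ref{th2} and \ref{th3}, and what results is exactly \eqref{3.19} with $\phi$ given its explicit value.

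The geometric summation is routine; the one genuine obstacle is the bookkeeping of $l$. Because the hypotheses of Theorem \ref{th4} force a single $l$ and are met only for $p>3$ (with $l=1$) or $p<1$ (with $l=-1$), the intermediate value $p=2$ is not literally covered by a direct appeal to that theorem and must be treated by the separate applications of Theorems \ref{th2} and \ref{th3} with opposite signs of $l$. The final constant is nevertheless uniform over all admissible $p$ because each geometric sum collapses to $1/|2^p-2|$ or $1/|2^p-8|$, the absolute values rendering the formula insensitive to the particular $l$ chosen.
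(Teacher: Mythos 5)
Your proposal is correct and, in outline, is the same as the paper's: the paper's entire proof is the single sentence that the corollary follows from Theorem \ref{th4} with $\phi(x,y)=\theta(\|x\|^p+\|y\|^p)$. What you add is not mere bookkeeping: you correctly observe that this one-line deduction does not literally go through for $p=2$. The two series in \eqref{3.18} specialize to geometric series with ratios $2^{l(1-p)}$ and $2^{l(3-p)}$, so they converge simultaneously only when $p>3$ (take $l=1$) or $p<1$ (take $l=-1$); no single admissible $l$ covers $1<p<3$, and $p=2$ is exactly the one nonnegative integer left stranded. Your remedy --- invoking Theorem \ref{th2} with $l=1$ to produce $A_0$ and Theorem \ref{th3} with $l=-1$ to produce $C_0$, then repeating the subtraction step $(f(2x)-8f(x))-(f(2x)-2f(x))=-6f(x)$ from the proof of Theorem \ref{th4} --- is the standard fix and is sound. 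Your evaluation of the sums also checks out: in every regime each geometric sum collapses to $\theta\|x\|^p/|2^p-2|$ or $\theta\|x\|^p/|2^p-8|$ (the absolute values absorbing the sign differences between the two choices of $l$), so the final bound $\frac{\theta}{6}\bigl[\frac{1}{|2^p-2|}+\frac{1}{|2^p-8|}\bigr]\|x\|^p$ is uniform over all admissible $p$. In short: same route as the paper, but with a necessary case distinction that the paper's proof silently omits.
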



\begin{corollary}
Let $r,s$ and $\theta$ be nonnegative integer numbers with $p=r+s\neq 1,3$. Suppose that an odd mapping $f: X \longrightarrow Y$ satisfies
\begin{align*}
\left\|D_f (x,y)\right\| \leq \theta\left\|x\right\|^r\left\|y\right\|^s
\end{align*}
for all $x,y \in X$. Then there exist a unique additive function $A: X \longrightarrow Y$ and a unique cubic function $C: X \longrightarrow Y$ satisfying 
\begin{align*}
\left\|f(x)-A(x)-C(x)\right\| \leq \frac{\theta}{12}\left[\frac{1}{|2^{p}-2|} + \frac{1}{|2^{p}-8|}\right]\left\|x\right\|^p
\end{align*}
for all $x \in X$.
\begin{proof}
Choosing $\phi(x,y) = \theta\left\|x\right\|^r\left\|y\right\|^s$ in Theorem \ref{th4}, one can obtain the desired result.
\end{proof}
\end{corollary}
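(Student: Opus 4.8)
The plan is to specialize the control function to $\phi(x,y) = \theta\|x\|^r\|y\|^s$ and push it through Theorem \ref{th4} (equivalently, through Theorems \ref{th2} and \ref{th3}), reducing the abstract series bound \eqref{3.19} to explicit geometric series. The crucial simplification is that on the diagonal the product of powers collapses to a single power: for any $t\in X$ one has $\phi(t,t) = \theta\|t\|^{r}\|t\|^{s} = \theta\|t\|^{r+s} = \theta\|t\|^{p}$, so every term in \eqref{3.19} becomes a scalar multiple of $\|x\|^p$. First I would verify the summability hypotheses \eqref{3.18} and the two limit conditions; since $\phi(x/2^{il},x/2^{il}) = \theta\|x\|^p 2^{-ilp}$, these reduce to convergence of the geometric series $\sum_i 2^{il(1-p)}$ and $\sum_i 2^{il(3-p)}$, which dictates the admissible choice of $l$ as described below.

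Next I would evaluate the two series appearing in \eqref{3.19}. Using $\phi\bigl(x/2^{l(i+l)},x/2^{l(i+l)}\bigr) = \theta\|x\|^p 2^{-l(i+l)p}$, the bound splits as
\[
\frac{\theta\|x\|^p}{12}\left[\sum_i 2^{il}\,2^{-l(i+l)p} + \sum_i 8^{il}\,2^{-l(i+l)p}\right].
\]
Each summand is geometric. For $l=1$ (summing from $i=0$) the first series equals $2^{-p}\sum_{i\ge 0}2^{i(1-p)} = 1/(2^p-2)$ and the second equals $2^{-p}\sum_{i\ge 0}2^{i(3-p)} = 1/(2^p-8)$; for $l=-1$ (summing from $i=1$) a parallel computation yields $1/(2-2^p)$ and $1/(8-2^p)$. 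In every case the closed form is $1/|2^p-2|$ and $1/|2^p-8|$ respectively, the absolute value appearing precisely because the sign of $2^p-2$ and $2^p-8$ is pinned down by the range of $p$ forcing the choice of $l$.

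The main obstacle — and the only genuine subtlety — is matching the convergence regimes to the sign bookkeeping. The first series converges only when $2^{l(1-p)}<1$, i.e.\ for $l=1$ when $p>1$ and for $l=-1$ when $p<1$; the second converges only when $2^{l(3-p)}<1$, i.e.\ for $l=1$ when $p>3$ and for $l=-1$ when $p<3$. Hence a single value of $l$ suffices exactly when $p>3$ (take $l=1$) or $p<1$ (take $l=-1$), and these are direct applications of Theorem \ref{th4}. For the intermediate range $1<p<3$ I would instead invoke Theorem \ref{th2} with $l=1$ to produce the additive approximant $A_0$ and Theorem \ref{th3} with $l=-1$ to produce the cubic approximant $C_0$, then combine them exactly as in the proof of Theorem \ref{th4} (subtract \eqref{3.20} from \eqref{3.21}, divide by $6$, and set $A=-\tfrac16 A_0$, $C=\tfrac16 C_0$). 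In all three regimes one checks that $2^p-2$ and $2^p-8$ carry the sign making the closed forms equal to $1/|2^p-2|$ and $1/|2^p-8|$, so the combined estimate is
\[
\|f(x)-A(x)-C(x)\| \le \frac{\theta}{12}\left[\frac{1}{|2^p-2|} + \frac{1}{|2^p-8|}\right]\|x\|^p,
\]
which is the assertion; uniqueness of $A$ and $C$ is inherited from Theorems \ref{th2} and \ref{th3}.
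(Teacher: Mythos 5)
Your proof is correct, and it is actually more complete than the paper's own proof, which consists of the single line ``choose $\phi(x,y)=\theta\|x\|^r\|y\|^s$ in Theorem \ref{th4}.'' The point you isolate as the ``only genuine subtlety'' is in fact a real defect in that one-line argument: Theorem \ref{th4} demands that \emph{both} series in \eqref{3.18} converge for the \emph{same} $l\in\{-1,1\}$, and with $\phi(t,t)=\theta\|t\|^{p}$ the first series requires $l(1-p)<0$ while the second requires $l(3-p)<0$; these are simultaneously satisfiable only when $p>3$ (with $l=1$) or $p<1$ (with $l=-1$). Thus the paper's cited route simply does not apply when $1<p<3$ --- in particular for $p=2$, a value explicitly permitted by the hypothesis $p=r+s\neq 1,3$ with $r,s$ nonnegative integers. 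Your repair for that range --- invoking Theorem \ref{th2} with $l=1$ to get the additive approximant via \eqref{3.20}, Theorem \ref{th3} with $l=-1$ to get the cubic approximant via \eqref{3.21}, and then combining them exactly as in the proof of Theorem \ref{th4} (subtract, divide by $6$, set $A=-\tfrac16 A_0$, $C=\tfrac16 C_0$) --- is precisely what is needed, and your geometric-series evaluations ($1/(2^p-2)$, $1/(2^p-8)$ for $l=1$; $1/(2-2^p)$, $1/(8-2^p)$ for $l=-1$) correctly reproduce the stated constant $\frac{\theta}{12}\bigl[\frac{1}{|2^p-2|}+\frac{1}{|2^p-8|}\bigr]$ in every regime. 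The only caveat, which you share with the paper rather than fall below it, is that uniqueness of the \emph{pair} $(A,C)$ satisfying the combined estimate is asserted by inheritance from Theorems \ref{th2} and \ref{th3} without a separate argument; a fully rigorous treatment would still need to show that the combined bound pins down the additive and cubic parts separately.
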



\subsection*{Acknowledgements}
The authors would like to thank the anonymous reviewer for very helpful comments and suggesting some related references.

\end{document}